\documentclass[12pt,leqno,a4paper]{amsart}
\usepackage{amssymb,enumerate,verbatim}                  

\usepackage[usenames]{color}
\usepackage{hyperref}   
\usepackage{array,arydshln}
\usepackage{enumitem}
\usepackage{comment}
\usepackage[all]{xy}


\overfullrule 5pt

\textwidth160mm
\oddsidemargin5mm
\evensidemargin5mm
\topmargin-5mm
\textheight235mm
\setlength{\topmargin}{-5mm}
\setlength{\footskip}{10mm}

\setlist[enumerate]{labelsep=*, leftmargin=1.5pc}
\setlist[itemize]{labelsep=0.5pc}



\hypersetup{pdftex,                           
bookmarks=true,
pdffitwindow=true,
colorlinks=true,
citecolor=black,
filecolor=black,
linkcolor=black,
urlcolor=blue,
hypertexnames=true}


\DeclareMathOperator{\Hom}{Hom}
\DeclareMathOperator{\Ext}{Ext}%

\DeclareMathOperator{\Syl}{Syl}    

\newcommand{\PGL}{{\operatorname{PGL}}}
\newcommand{\PSL}{{\operatorname{PSL}}}

\newcommand{\SL}{{\operatorname{SL}}}

\newcommand{\SU}{{\operatorname{SU}}}

\newcommand{\IZ}{\mathbb{Z}}

\newcommand{\fA}{{\mathfrak{A}}}

\newcommand{\cO}{{\mathcal{O}}}

\let\lra=\longrightarrow


\newtheorem{thm}{Theorem}[section]

\newtheorem{lem}[thm]{Lemma}
\newtheorem{prop}[thm]{Proposition}

\theoremstyle{definition}

\newtheorem{rem}[thm]{Remark}

\theoremstyle{remark}


\begin{document}


\title{Splendid Morita equivalences for principal blocks with generalised quaternion defect groups}
\date{\today}
\author{{Shigeo Koshitani and Caroline Lassueur}}
\address{{\sc Shigeo Koshitani}, Center of Frontier Science,
Chiba University, 1-33 Yayoi-cho, Inage-ku, Chiba 263-8522, Japan.}
\email{koshitan@math.s.chiba-u.ac.jp}
\address{{\sc Caroline Lassueur}, FB Mathematik, TU Kaiserslautern, Postfach 3049,
         67653 Kaiserslautern, Germany.}
\email{lassueur@mathematik.uni-kl.de}

\thanks{The first author was partially supported by the Japan Society for
Promotion of Science (JSPS), Grant-in-Aid for Scientific Research
(C)15K04776, 2015--2017. The second author acknowledges financial support by DFG TRR 195. This paper is also based upon work supported by the National Science Foundation under Grant No. DMS1440140 while the second author was in residence at the Mathematical Sciences Research Institute in Berkeley, California, during the Spring 2018 semester.}

\keywords{Splendid Morita equivalences, generalised quaternion 2-groups, Scott modules, Brauer indecomposability}

\subjclass[2010]{16D90, 20C05, 20C20, 20C15, 20C33}


\begin{abstract}
We prove that splendid Morita equivalences between principal blocks  of finite groups with dihedral Sylow $2$-subgroups realised by Scott modules can be lifted to splendid Morita equivalences between principal blocks of finite groups with generalised quaternion Sylow $2$-subgroups realised by Scott modules.
\end{abstract}

\maketitle

\pagestyle{myheadings}
\markboth{S. Koshitani and C. Lassueur}{Splendid Morita equivalences for principal $2$-blocks with generalised quaternion defect groups}

\section{Introduction}

The aim of this note is to prove that the knowledge of the equivalence classes of splendid Morita equivalences for principal blocks with dihedral defect groups is enough to describe  
the splendid Morita equivalence classes of  principal blocks with generalised quaternion defect groups, as well as the bimodules realising these equivalences. Our main result is as follows:

\begin{thm}\label{MainThm}\label{thm:intro}
Let $G$ be an arbitrary finite group with a generalized quaternion
Sylow $2$-subgroup $P\cong Q_{2^n}$ of order $2^n$ with $n\geq 3$. Let $k$ be an algebraically closed field of characteristic~$2$, and let $B_0(kG)$ denote the principal block of $kG$.  Then: 
\begin{enumerate}
\item[\rm(a)] $B_0(kG)$ is splendidly Morita equivalent to precisely one of the following principal blocks:
\begin{itemize}
\item[\rm(1)] $kQ_{2^n}$;
\item[\rm(2)] $B_0(k[2.\mathfrak A_7])$ in case $n=4$;
\item[\rm(3)] $B_0(k[{\mathrm{SL}}_2(q)])$, 
where $q$ is a fixed odd prime power such that $(q-1)_2=2^n$;
\item[\rm(4)] $B_0(k[{\mathrm{SL}}_2(q)])$, 
where $q$ is a fixed odd prime power such that $(q+1)_2=2^n$;
\item[\rm(5)] $B_0(k[{\mathrm{2.PGL}}_2(q)])$, 
where $q$ is a fixed odd prime power such that $2(q-1)_2=2^n$;
\item[\rm(6)] $B_0(k[{\mathrm{2.PGL}}_2(q)])$, 
where $q$ is a  fixed odd prime power such that \smallskip $2(q+1)_2=2^n$.
\end{itemize}
\item[\rm(b)] Moreover, the splendid Morita equivalence of Part {\rm(a)} is realised by the Scott module ${\mathrm{Sc}}(G\times H, \Delta P)$, where $H$ is the group listed in the corresponding case  \smallskip $(1)$--$(6)$.
\item[\rm(c)]
In particular, if $q$ and $q'$ are two odd prime powers as in cases {\rm(3)}--{\rm(6)} such that
$(q-1)_2=(q'-1)_2$, respectively $(q+1)_2=(q'+1)_2$, then  $B_0(k[{\mathrm{SL}}_2(q)])$ is splendidly Morita equivalent to
$B_0(k[{\mathrm{SL}}_2(q')])$, respectively $B_0(k[{\mathrm{2.PGL}}_2(q)])$ is splendidly Morita equivalent to $B_0(k[{\mathrm{2.PGL}}_2(q')])$. 
\end{enumerate}
\end{thm}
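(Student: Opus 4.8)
The plan is to combine the known classification of principal blocks with dihedral defect groups (Erdmann's and others' work, together with the recent identification of the splendid Morita equivalence classes via Scott modules) with a block-theoretic ``lifting'' procedure. The starting observation is that if $P\cong Q_{2^n}$ is a Sylow $2$-subgroup of $G$, then $P$ has a unique central involution $z$, and by a Brauer correspondence/fusion argument the principal block of $k[N_G(\langle z\rangle)/\langle z\rangle]$ (or of a suitable quotient) has a \emph{dihedral} defect group of order $2^{n-1}$. The first step is therefore to set up the correspondence $G\rightsquigarrow \overline G:=C_G(z)/\langle z\rangle$ (or $N_G(\langle z\rangle)/\langle z\rangle$), check that the principal $2$-block is preserved, and that the Sylow $2$-subgroup of $\overline G$ is indeed dihedral of order $2^{n-1}$ (including the degenerate cases $2^{n-1}\le 4$). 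The known dihedral classification then attaches to $\overline G$ one of the Morita equivalence types on Erdmann's list, and the dictionary in Part (a) (cases (1)--(6)) is obtained by matching each dihedral type with its quaternion ``double cover''; concretely $kQ_{2^n}\leftrightarrow kD_{2^{n-1}}$, $B_0(k[2.\mathfrak A_7])\leftrightarrow B_0(k\mathfrak A_7)$, $B_0(k[\SL_2(q)])\leftrightarrow B_0(k[\PSL_2(q)])$, and $B_0(k[2.\PGL_2(q)])\leftrightarrow B_0(k[\PGL_2(q)])$, with the two families in each case distinguished by whether the relevant $2$-part is $(q\mp1)_2$.

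The second, and technically central, step is to show that a splendid Morita equivalence ``downstairs'' — between $B_0(k\overline G)$ and $B_0(k\overline H)$, realised by a Scott module $\mathrm{Sc}(\overline G\times\overline H,\Delta\overline P)$ — can be lifted to a splendid Morita equivalence ``upstairs'' between $B_0(kG)$ and $B_0(kH)$ realised by $\mathrm{Sc}(G\times H,\Delta P)$. I would use the general machinery for gluing/lifting endopermutation source bimodules along the central extension $1\to\langle z\rangle\to P\to \overline P\to1$: the Scott module $M:=\mathrm{Sc}(G\times H,\Delta P)$ has trivial source, hence trivial (in particular endopermutation) source, so it suffices to verify that $M$ and its dual induce mutually inverse equivalences. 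The cleanest route is to invoke the criterion that a $p$-permutation bimodule with diagonal vertex $\Delta P$ induces a splendid Morita equivalence if and only if its Brauer construction $M(\Delta P)$ induces a Morita equivalence between the Brauer correspondents $B_0(k C_G(P))$ and $B_0(k C_H(P))$ — and here both Brauer correspondents are $kP$-type local blocks that are controlled by the central involution. One then reduces, via the Brauer construction at $\Delta\langle z\rangle$, to the already-known equivalence downstairs: $M(\Delta\langle z\rangle)$ is (up to the twist by $z$) the Scott module $\mathrm{Sc}(\overline G\times\overline H,\Delta\overline P)$, which by hypothesis is a splendid Morita bimodule. Keeping careful track of the central characters and of the fact that $\langle z\rangle$ acts trivially on everything in sight (principal blocks!) is what makes this go through.

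For Part (b), once the lift is established the realising bimodule is \emph{by construction} the Scott module $\mathrm{Sc}(G\times H,\Delta P)$ with $H$ the group in the matching case $(1)$--$(6)$; the only thing to add is the Brauer-indecomposability input — that $\mathrm{Sc}(G\times H,\Delta P)$ is indecomposable as a $k[G\times H]$-module and remains indecomposable (with the expected vertex) after Brauer construction — which for these small $2$-groups and these explicit groups $H$ is either in the literature or follows from the fusion-stability criterion for Brauer indecomposability of Scott modules; this is also exactly what is needed to run the Brauer-construction criterion in Step 2. Part (c) is then a formal consequence: splendid Morita equivalence is an equivalence relation, so if $q,q'$ both satisfy $(q-1)_2=(q'-1)_2$ (respectively $(q+1)_2$, or the $2.\PGL_2$ analogues) then $B_0(k[\SL_2(q)])$ and $B_0(k[\SL_2(q')])$ are both splendidly Morita equivalent — via the respective Scott modules — to the \emph{same} representative block on the list in Part (a), hence to each other, and composing the two Scott-module equivalences (i.e.\ taking the appropriate tensor product over the representative block) realises the equivalence between them.

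The step I expect to be the main obstacle is Step 2, the lifting itself: the degenerate small cases ($Q_8$, with $D_4\cong C_2\times C_2$ downstairs, and the borderline $n=3,4$ situations) do not fit the generic dihedral pattern and have to be handled separately, and one must be scrupulous about the central extension not splitting and about which $2$-block of the (possibly non-split) quotient actually shows up — a naive application of ``quotient out $z$'' can land in the wrong block or produce the wrong twist. Verifying the Brauer-construction criterion in full — in particular that $M(\Delta P)$ really is an invertible $k[C_G(P)\times C_H(P)]$-bimodule and not merely that $M$ has the right vertex — is where the real work lies; everything else is bookkeeping with Erdmann's list and with the already-established dihedral classification.
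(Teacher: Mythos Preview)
Your overall architecture --- pass to a quotient by the central involution to land in the dihedral case, then lift the splendid Morita equivalence realised by a Scott module back up --- is exactly the paper's strategy. But the execution in the paper is both simpler and different in several places, and your proposed technical route has gaps.

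First, the reduction. You work with $C_G(z)/\langle z\rangle$ (or $N_G(\langle z\rangle)/\langle z\rangle$) and leave open which quotient is right. The paper instead reduces to $O_{2'}(G)=1$ (harmless for principal blocks) and then invokes the Brauer--Suzuki Theorem to conclude that $Z(G)=\langle z\rangle\cong C_2$; thus $C_G(z)=G$ and one simply takes $\bar G=G/Z(G)$, with $\bar P=P/Z\cong D_{2^{n-1}}$ a genuine Sylow $2$-subgroup of $\bar G$. No local subgroup gymnastics are needed.

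Second, the lifting. Your ``cleanest route'' via the Brauer-construction criterion (checking $M(\Delta P)$, then reducing via $M(\Delta\langle z\rangle)$) is not what the paper does, and as stated it is not a correct criterion for a Morita equivalence --- that style of argument gives stable equivalences of Morita type, and $M(\Delta\langle z\rangle)$ is a $k[C_G(z)\times C_H(z)]$-module, not a $k[\bar G\times\bar H]$-module. The paper bypasses all of this by quoting Rouquier's lemma on central quotients: if $Z\leq Z(G)\cap Z(H)$ and $M$ is an indecomposable trivial-source $(kG,kH)$-bimodule with vertex $\Delta P$, then $M$ induces a Morita equivalence between $A$ and $B$ \emph{iff} $\bar M:=k\bar G\otimes_{kG}M\otimes_{kH}k\bar H$ induces one between $\bar A$ and $\bar B$. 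Combined with the elementary computation $\overline{\mathrm{Sc}(G\times H,\Delta P)}\cong \mathrm{Sc}(\bar G\times\bar H,\Delta\bar P)$, this gives the lift in one stroke. In particular --- contrary to what you write --- Brauer indecomposability is \emph{not} an input here; the paper actually derives it as a corollary.

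Third, you assert the dictionary $H\leftrightarrow\bar H$ but do not explain why, for each $\bar H$ on the dihedral list, there is a \emph{unique} central extension $1\to C_2\to H\to\bar H\to1$ whose Sylow $2$-subgroup is $Q_{2^n}$. The paper handles this with a short cohomological argument: $\mathrm{res}^{\bar H}_{\bar P}:H^2(\bar H,C_2)\to H^2(\bar P,C_2)$ is injective (index prime to $2$), and an explicit computation of $H^2(D_{2^{n-1}},C_2)\cong(\mathbb Z/2)^3$ shows exactly one class yields $Q_{2^n}$; hence at most one class in $H^2(\bar H,C_2)$ can have quaternion Sylow, and one then just exhibits the obvious candidate ($Q_{2^n}$, $2.\mathfrak A_7$, $\SL_2(q)$, $2.\PGL_2(q)$). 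This is what pins down the list in Part~(a) and makes Parts~(b),(c) immediate from the dihedral theorem; your proposal leaves this step implicit.
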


This is motivated by  Puig's Finiteness Conjecture (see Th\'{e}venaz \cite[(38.6) Conjecture]{The95} for a published version) stating that for a given prime $p$ and a finite $p$-group $P$ there are only finitely many isomorphism classes of interior $P$-algebras arising as source algebras of $p$-blocks of finite groups with defect groups isomorphic to $P$, or equivalently that there are only a finite number of {\it splendid Morita equivalence}  classes of blocks of finite groups with defect groups isomorphic to~$P$. This obviously strengthens Donovan's Conjecture.  However, we emphasise that by contrast to Donovan's Conjecture,  if $p$ is a prime number, $(K,\cO, k)$ a $p$-modular system with $k$ algebraically closed, and  Puig's Finiteness Conjecture holds over $k$, then it automatically holds over~$\cO$, since the bimodules inducing splendid Morita equivalences are liftable from $k$ to $\cO$. The cases where $P$ is either cyclic \cite{Lin96b} or a Klein-four group \cite{CEKL11} are the only case where this conjecture has been proved to hold in full generality. Else, under additional assumptions, Puig's Finiteness Conjecture has also been proved for several classes of finite groups, as for instance for $p$-soluble groups \cite{Pui94}, for symmetric groups \cite{Pui94}, for alternating groups and  the double covers thereof,  for Weyl groups, or for classical groups, see \cite{HK00, HK05} and the references therein.\par
We also recall that Erdmann \cite{Erd90} classified blocks of tame representation type up to Morita equivalence by describing their  basic algebras by generators and relations making intense use of the Auslander-Reiten quiver, but her results are not liftable to $\cO$ in general and do not imply that the resulting Morita equivalences are necessarily splendid Morita equivalences.


\vspace{2mm}
\section{Preliminaries}

Throughout this paper, unless otherwise stated we adopt the following notation and conventions.  All groups considered are assumed to be finite and all modules over finite group algebras are assumed to be finitely generated right modules.\par
We denote the dihedral group of order $2^n$ ($n\geq 2$) by $D_{2^n}$, the generalised quaternion group of order $2^n$ ($n\geq 3$) by  $Q_{2^n}$, and the cyclic group of order $m\in\IZ_{\geq 0}$ in multiplicative notation by $C_m$. 
Given an arbitrary finite group $G$  of order divisible by a prime $p$, we denote by $k_G$ the trivial $kG$-module, and we write $B_0(kG)$ for the principal $p$-block of~$kG$. \par
For a subgroup $H\leq G$ we denote the Scott $kG$-module with respect to~$H$ by ${\mathrm{Sc}}(G,H)$.
By definition ${\mathrm{Sc}}(G,H)$ is, up to isomorphism, the unique indecomposable direct summand
of the induced module ${k_H}{\uparrow}^G$ which contains $k_G$ in its
top (or equivalently in its socle).  If $Q\in \Syl_p(H)$, then $Q$ is a vertex of ${\mathrm{Sc}}(G,H)$ 
and a $p$-subgroup of $G$ is a vertex of ${\mathrm{Sc}}(G,H)$ if and only if it is $G$-conjugate to $Q$. 
It follows that ${\mathrm{Sc}}(G,H)={\mathrm{Sc}}(G,Q)$. We refer the reader to  \cite[\S2]{Bro85} and  \cite[Chap.4 \S 8.4]{NT89} for these results. Equivalently, ${\mathrm{Sc}}(G,H)$ is the relative $H$-projective cover of the trivial module $k_G$;  see \cite[Proposition 3.1]{The85}. 
If $N\vartriangleleft G$ is a normal subgroup, then we use the bar notation 
$$\bar{G}:=G/N\qquad\text{ and }\qquad \bar g:=gN\text{ for }g\in G$$
to denote the corresponding quotient and its elements. Moreover, if $M$ is a $kG$-module such that $N\leq \ker M$, then $M$ becomes a $k\bar G$-module via  $m\,{\bar g}:=mg$ for each $g\in G$ and each $m\in M$, and $M_{kG}$ is indecomposable if and only if $M_{k\bar G}$ is indecomposable. Moreover, 
$$M\otimes_{kG}\,k\bar G \cong M$$
as right $k\bar G$-modules, canonically via the map $m\otimes_{kG}\bar g\mapsto m\bar g$ for all $m\in M$, $g\in G$. 
We denote by $\pi: G\twoheadrightarrow \bar{G}$ the canonical homomorphism, and extended it by $k$-linearity to $\pi: kG\twoheadrightarrow k\bar G$. Then for  a block  $A$ of $kG$, we write $\bar A:=\pi (A)$, which, in general, is not  a block but a certain direct sums of blocks of $k\bar G$.\\

If $G$ and $H$ are finite groups,  $A$ and $B$ are block algebras of $kG$ and $kH$ respectively and $M$ is an indecomposable $(A,B)$-bimodule  inducing a Morita equivalence between $A$ and $B$, then we view $M$ as a right $k[G\times H]$-module via the right $G\times H$-action $m\cdot (g,h):=g^{-1}mh$ for all $m\in M, g\in G, h\in H$.
Furthermore, the algebras $A$ and $B$ are called \emph{splendidly Morita equivalent} (or also \emph{Puig equivalent}), if  
there is a Morita equivalence between $A$ and $B$ induced by an $(A,B)$-bimodule $M$ such that $M$, viewed  as a right $k[G\times H]$-module, is a $p$-permutation module. 
In this case, due to a result of Puig (see \cite[Corollary~7.4]{Pui99} and \cite[Proposition~9.7.1]{Lin18}), the defect groups $P$ and $Q$ of $A$ and $B$ respectively are isomorphic (and hence from now on we identify $P$ and $Q$).
Obviously $M$ is indecomposable as a $k(G\times H)$-module. Moreover, since $M$ induces a Morita equivalence, $_AM$ and $M_B$ are both projective and therefore $M$ has a vertex $R$ of the form  $R=\Delta (P) \leq G\times H$.  We notice that, by a result of Puig and Scott, this definition  is equivalent to the condition that $A$ and $B$ have source algebras which are isomorphic as interior $P$-algebras (see \cite[Theorem~4.1]{Lin01} and \cite[Remark 7.5]{Pui99}).\par In order to produce splendid Morita equivalences between principal 
blocks of two finite groups $G$ and $G'$ with a common defect group $P$, we mainly use Scott modules of the form 
${\mathrm{Sc}}(G\times G', \, \Delta P)$, which are obviously $(B_0(kG),B_0(kG'))$-bimodules.  In particular, we note that if  
the Scott module $M:=\textrm{Sc}(G\times H,\Delta P)$ induces a Morita equivalence between 
the principal blocks $A$ and $B$ of $kG$ and $kH$, respectively, then this is a splendid Morita equivalence because Scott modules are $p$-permutation modules by definition.\\

In this paper, we will rely on the following classification of  principal 2-blocks of groups with dihedral Sylow $2$-subgroups, up to splendid Morita equivalence, where the result for Klein-four groups is the main result of \cite{CEKL11} and the result for dihedral groups of order at least $8$ is the main result of \cite{KL18}:

\begin{thm}[\cite{CEKL11} and \cite{KL18}]\label{dihedral}
Let $G$ be an arbitrary finite group with a dihedral Sylow $2$-subgroup
$P=D_{2^n}$ of order $2^n$ with $n\geq 2$.  Let $k$ be an algebraically closed field of characteristic~$2$, and let $B_0(kG)$ denote the principal block of $kG$.  Then: 
\begin{enumerate}[leftmargin=9mm]
\item[\rm(a)] $B_0(kG)$ is splendidly Morita equivalent to precisely one of the following principal blocks:
\begin{itemize}
\item[\rm(1)] $kD_{2^n}$;
\item[\rm(2)] $B_0(k\mathfrak A_7)$ in case $n=3$;
\item[\rm(3)] $B_0(k[{\mathrm{PSL}}_2(q)])$, 
where $q$ is a fixed odd prime power such that $(q-1)_2=2^n$;
\item[\rm(4)] $B_0(k[{\mathrm{PSL}}_2(q)])$, 
where $q$ is a fixed odd prime power such that $(q+1)_2=2^n$;
\item[\rm(5)] $B_0(k[{\mathrm{PGL}}_2(q)])$, 
where $q$ is a fixed odd prime power such that $2(q-1)_2=2^n$; 
\item[\rm(6)] $B_0(k[{\mathrm{PGL}}_2(q)])$, 
where $q$ is a  fixed odd prime power such that \smallskip $2(q+1)_2=2^n$.
\end{itemize}
\item[\rm(b)] The splendid Morita equivalence of {\rm(a)} is realised by the Scott module  ${\mathrm{Sc}}(G\times H, \Delta P)$, where $H$ is the group listed in the corresponding case  \smallskip $(1)$--$(6)$.
\item[\rm(c)] If $q$ and $q'$ are two odd prime powers
as in {\rm(3)}-{\rm(6)} such that either 
$(q-1)_2=(q'-1)_2$ (Cases {\rm(3)} and {\rm(5)}), respectively $(q+1)_2=(q'+1)_2$ (Cases {\rm(4)} and {\rm(6)}), then 
$B_0(k[{\mathrm{PSL}}_2(q)])$ is splendidly Morita equivalent to
$B_0(k[{\mathrm{PSL}}_2(q')])$, respectively $B_0(k[{\mathrm{PGL}}_2(q)])$ is splendidly Morita equivalent to $B_0(k[{\mathrm{PGL}}_2(q')])$. 
\end{enumerate}
\end{thm}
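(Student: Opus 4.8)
The plan is to obtain the statement by assembling two existing classifications: the case $n=2$, in which $P\cong C_2\times C_2$ is a Klein four-group, is the main theorem of \cite{CEKL11}, and the case $n\geq 3$ is the main theorem of \cite{KL18}. So no genuinely new argument is needed here; for orientation I record how these results are built, concentrating on $n\geq 3$.

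First one uses the classical structure theory of blocks with dihedral defect groups to see that $B_0(kG)$ has exactly $1$, $2$ or $3$ isomorphism classes of simple modules, the three cases being governed by the fusion system of $P$ in $G$; on $D_{2^n}$ (for $n\geq 3$) there are precisely three saturated fusion systems, realised respectively by $D_{2^n}$, by $\PGL_2(q)$, and by $\PSL_2(q)$ (the last one also by $\fA_7$ when $n=3$). Erdmann's classification of algebras of tame representation type \cite{Erd90} then determines the Morita equivalence class of $B_0(kG)$ from this invariant: its basic algebra must be one of those occurring in cases $(1)$--$(6)$, and in the three-simple-module case with $n=3$ there occur exactly the two non-isomorphic Morita equivalence classes $B_0(k\fA_7)$ and $B_0(k[\PSL_2(q)])$. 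This establishes part~(a) at the level of ordinary Morita equivalence.

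Next, for each pair $(G,H)$ with $H$ as in $(1)$--$(6)$, one upgrades this to a \emph{splendid} equivalence and identifies the inducing bimodule with $M:={\mathrm{Sc}}(G\times H,\Delta P)$: one shows that $M$, regarded as a $(B_0(kG),B_0(kH))$-bimodule, induces a Morita equivalence, and since a Scott module is a $p$-permutation module by definition, that equivalence is then automatically splendid, which is part~(b). In practice one first treats the ``reference'' comparisons of $G$ with each of $\PSL_2(q)$, $\PGL_2(q)$ and (for $n=3$) $\fA_7$ --- computing the vertex and the Green correspondent of $M$, analysing its Brauer construction, and comparing ordinary and modular character data to verify that tensoring with $M$ gives an equivalence between the module categories of the two blocks --- and then deduces the general case by transitivity of splendid Morita equivalences. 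Part~(c) is then immediate, because the fusion system of a Sylow $2$-subgroup of $\PSL_2(q)$ (resp.\ of $\PGL_2(q)$) depends only on $(q-1)_2$ (resp.\ on $(q+1)_2$), and by parts~(a)--(b) the splendid Morita class of the principal block is determined by this fusion datum.

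The real obstacle lies in the middle step: Erdmann's input is purely module-theoretic, is established only over $k$, and does not by itself produce splendid equivalences, so one must reconstruct the equivalences within a $p$-permutation-theoretic framework and pin down the inducing bimodule as a Scott module in every case --- with the extra subtlety, when $n=3$, of separating the behaviour of the genuine group block $B_0(k[\PSL_2(q)])$ from that of the exotic block $B_0(k\fA_7)$ which shares its fusion system.
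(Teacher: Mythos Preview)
Your proposal is correct and matches the paper's treatment: the paper does not prove this theorem but simply cites it, attributing the Klein-four case $n=2$ to \cite{CEKL11} and the case $n\geq 3$ to \cite{KL18}, exactly as you do. Your additional orientation sketch of how \cite{KL18} proceeds (fusion systems on $D_{2^n}$, Erdmann's classification for the underlying Morita type, then upgrading via the Scott module and its Brauer construction) is consistent with the paper's one-line remark that the proof in \cite{KL18} relies on the Brauer indecomposability of ${\mathrm{Sc}}(G\times H,\Delta P)$, and goes somewhat beyond what the paper itself records.
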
 

In \cite{KL18} the  proof of the statements  for $n\geq 3$ partly  relies on the Brauer indecomposability of the Scott module  ${\mathrm{Sc}}(G\times H, \Delta P)$ inducing the splendid Morita equivalence. 
Passing to generalised quaternion Sylow $2$-subgroup we won't need to use arguments involving Brauer indecomposability, however we note that the other way around a Scott module realising a splendid Morita equivalence is necessarily Brauer indecomposable. \\

Let us recall the definition of the {\it Brauer indecomposability}, which was first defined in \cite{KKM11}. 
Given a $kG$-module $V$ and a $p$-subgroup $Q\leq G$, the \emph{Brauer  construction} (or \emph{Brauer quotient}) of $V$ with  respect to $Q$  is defined to be the $kN_G(Q)$-module
$$
V(Q):= V^{Q}\big/ \sum_{R<Q}{\mathrm{Tr}}^Q_R(V^R)\, ,
$$
where, for $R\leq Q$, $V^{R}$ denotes the set of $R$-fixed points of $V$, and for each proper subgroup $R<Q$,  ${\mathrm{Tr}}^Q_R~:~V^R\longrightarrow V^Q, v\mapsto \sum_{xR\in Q/R}xv$ denotes the relative trace map. 
See e.g. \cite[\S 27]{The95}. The Brauer  construction  with  respect  to $Q$ sends  a $p$-permutation $kG$-module 
$V$ functorially to the $p$-permutation $kN_G(Q)$-module $V(Q)$, see \cite[p.402]{Bro85}.\\
Then, following the terminology introduced in \cite{KKM11}, a $kG$-module $V$ is said to be \emph{Brauer indecomposable}  if the $kC_G(Q)$-module $V(Q)\!\downarrow^{N_G(Q)}_{C_G(Q)}$ is indecomposable or zero for each $p$-subgroup $Q\leq G$.

\begin{lem}\label{BrauerIndec}
Let $G$ and $H$ be two arbitrary finite groups with a common Sylow $p$-subgroup~$P$. 
If $B_0(kG)$ and $B_0(kH)$ are splendidly Morita equivalent via the Scott module ${\mathrm{Sc}}(G\times H,\,\Delta P)$, 
then ${\mathrm{Sc}}(G\times H,\,\Delta P)$ is Brauer indecomposable. In particular the Scott module ${\mathrm{Sc}}(G\times H,\,\Delta P)$ in Theorem~\ref{thm:intro}(b) is Brauer indecomposable.
\end{lem}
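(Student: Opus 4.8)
The plan is to verify directly the defining condition of Brauer indecomposability. Write $M:={\mathrm{Sc}}(G\times H,\Delta P)$, $A:=B_0(kG)$ and $B:=B_0(kH)$. Then $M$ is an indecomposable $p$-permutation $k[G\times H]$-module with vertex $\Delta P$ which, by hypothesis, induces a Morita equivalence between $A$ and $B$; its $k$-dual $M^{*}$ induces the inverse Morita equivalence, so that $M\otimes_B M^{*}\cong A$ and $M^{*}\otimes_A M\cong B$ as bimodules.

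First I would reduce to diagonal $p$-subgroups. Let $Q\leq G\times H$ be a $p$-subgroup. Since $M$ is indecomposable with vertex $\Delta P$, the Brauer construction $M(Q)$ is zero unless $Q$ is $(G\times H)$-conjugate to a subgroup of $\Delta P$; moreover, conjugating $Q$ by an element of $G\times H$ induces compatible isomorphisms on $M(Q)$ and on the pair $\bigl(N_{G\times H}(Q),C_{G\times H}(Q)\bigr)$, hence does not affect whether $M(Q)\!\downarrow^{N_{G\times H}(Q)}_{C_{G\times H}(Q)}$ is indecomposable or zero. So we may assume $Q\leq\Delta P$, that is, $Q=\Delta R$ for some $R\leq P$, where we identify $P$ with the chosen common Sylow $p$-subgroup.

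The core of the argument is to apply the Brauer construction at $\Delta R$ to the two bimodule isomorphisms above. We have $C_{G\times H}(\Delta R)=C_G(R)\times C_H(R)$, and the Brauer homomorphism $\mathrm{Br}_R$ carries the principal block idempotent of $kG$, resp. $kH$, to that of $kC_G(R)$, resp. $kC_H(R)$ (a standard fact, essentially Brauer's third main theorem), so the $\Delta R$-Brauer constructions of $A$ and $B$ are the block algebras $B_0(kC_G(R))$ and $B_0(kC_H(R))$. Since splendid (Puig) Morita equivalences localise --- the $\Delta R$-Brauer construction being compatible with composition of $p$-permutation bimodules having twisted diagonal vertices, see \cite{Pui99} --- the bimodule $\overline{M}:=M(\Delta R)\!\downarrow^{N_{G\times H}(\Delta R)}_{C_{G\times H}(\Delta R)}$ induces a Morita equivalence between the Brauer correspondents $B_0(kC_G(R))$ and $B_0(kC_H(R))$. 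As a bimodule inducing a Morita equivalence between two indecomposable algebras is itself indecomposable (its bimodule endomorphism ring being the local ring $Z(B_0(kC_H(R)))$), $\overline{M}$ is indecomposable as a $k[C_G(R)\times C_H(R)]$-module. This is precisely the Brauer indecomposability of $M$, and the last statement of the lemma follows by applying this to each pair of groups occurring in Theorem~\ref{thm:intro}.

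The step I expect to be the main obstacle is the localisation of the Morita equivalence: one needs that the $\Delta R$-Brauer construction is compatible with the composition $M\otimes_B M^{*}$, with no extraneous direct summands. In general the Mackey-type formula for $(M\otimes_B M^{*})(\Delta R)$ involves a sum indexed by the fusion of $R$ inside $H$; here one must use that $M$ and $M^{*}$ have genuinely diagonal vertex $\Delta P$ with $R\leq P$ --- or else invoke the local theory of Morita equivalences between Brauer blocks developed in \cite{Pui99} --- in order to see that only the diagonal term survives. The identification $\mathrm{Br}_{\Delta R}(B_0(kG))=B_0(kC_G(R))$ is the remaining, routine, ingredient.
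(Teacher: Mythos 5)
Your argument is correct and follows essentially the same route as the paper: pass to diagonal subgroups $\Delta R\leq\Delta P$, localise the Morita equivalence via the Brauer construction to get a Morita equivalence between $B_0(kC_G(R))$ and $B_0(kC_H(R))$, and conclude indecomposability of $M(\Delta R)$ from the indecomposability of blocks. The localisation step you flag as the main obstacle is exactly what the paper outsources to the Brou\'e--Rouquier result (cited as \cite[Lemma 4.1]{KL18}), applied to the stable equivalence of Morita type underlying the given Morita equivalence.
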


\begin{proof}
Set $M:={\mathrm{Sc}}(G\times H,\,\Delta P)$, $B:=B_0(kG)$ and $b:=B_0(kH)$. By definition of the Brauer indecomposability  \cite[Section 1]{KKM11}, we have to prove that  the $kC_G(Q)$-module $M(Q)\!\downarrow^{N_G(Q)}_{C_G(Q)}$ is indecomposable or zero for each $p$-subgroup $Q\leq G$.  Now, by the assumption $M$ induces a Morita equivalence and hence
induces a stable equivalence of Morita type between $B_0(kG)$ and $B_0(kH)$. Thus, by a result of Brou\'e and Rouquier (see \cite[Lemma 4.1]{KL18}) $M(\Delta Q)$ induces a Morita equivalence between
$B_0(kC_G(Q))$ and $B_0(kC_H(Q))$ for any non-trivial subgroup $Q\leq P$. But since blocks are indecomposable as $k$-algebras, $M(\Delta Q)$ has to be indecomposable
as $(B_0(kC_G(Q)), B_0(kC_H(Q)))$-bimodule, hence  as $k(C_{G\times H}(\Delta Q))$-module.
Finally, it is obvious that $M(\Delta \langle 1\rangle)=M$, so that $M$ itself is indecomposable as $C_{G\times H}(\Delta\langle 1\rangle)$-module since  $C_{G\times H}(\Delta\langle 1\rangle)=G\times H$. The claim follows.
\end{proof}

\noindent Furthermore, we will use the following well-known properties of group cohomology. We sketch their proofs for completeness.
Below we write the cylic group of order $2$ as $C_2$, resp. $\IZ/2$, to emphasise that the group law is considered multiplicatively, resp. additively.

\begin{lem}\label{lem:cohomWithCoeffsinC2}
Let $G$ be a finite group, and let $P$ be a Sylow $2$-subgroup of $G$. Let $C_2$ be the cyclic group of order $2$, which we view  as both a trivial $G$-module and a trivial $P$-module. Then restriction in cohomology 
$$\mathrm{res}^G_P:H^2(G,C_2)\lra H^2(P,C_2)$$
is an injective group homomorphism.
\end{lem}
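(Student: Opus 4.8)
The plan is to use the standard transfer (corestriction) argument from group cohomology. Recall that for any subgroup $P \le G$ and any $G$-module $M$, the composite
$$
\mathrm{cor}^G_P \circ \mathrm{res}^G_P : H^n(G,M) \lra H^n(G,M)
$$
is multiplication by the index $[G:P]$. First I would apply this with $M = C_2$ (written additively as $\IZ/2$) and $n=2$, and with $P$ a Sylow $2$-subgroup of $G$. Since $[G:P]$ is odd by definition of a Sylow $2$-subgroup, multiplication by $[G:P]$ is an automorphism of the elementary abelian $2$-group $H^2(G,\IZ/2)$ (indeed it equals the identity, as every element has order dividing $2$ and $[G:P]$ is odd). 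Hence $\mathrm{cor}^G_P \circ \mathrm{res}^G_P$ is an isomorphism, which forces $\mathrm{res}^G_P$ to be injective.

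The key steps, in order, are: (1) recall the existence of the corestriction (transfer) map $\mathrm{cor}^G_P : H^n(P,M) \to H^n(G,M)$ for $P \le G$ of finite index, together with the identity $\mathrm{cor}^G_P \circ \mathrm{res}^G_P = [G:P]\cdot \mathrm{id}$ on $H^n(G,M)$; a reference such as \cite[Chap.~XII]{CartanEilenberg} or \cite{Benson} suffices. (2) Observe that $H^2(G,C_2)$ is a vector space over $\IF_2$, so multiplication by the odd integer $[G:P]$ is the identity on it. (3) Conclude that $\mathrm{res}^G_P$ has a left inverse, hence is injective. (4) Note that $\mathrm{res}^G_P$ is a group homomorphism because restriction in cohomology always is.

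There is essentially no obstacle here: the statement is a textbook consequence of the transfer argument, and the only thing to be slightly careful about is the multiplicative-versus-additive notation for $C_2 \cong \IZ/2$, which is precisely why the paper flags it. I would simply remark that the abelian group $H^2(G,C_2)$ is an $\IF_2$-vector space and that $[G:P]$ is odd, so the composite $\mathrm{cor}^G_P \circ \mathrm{res}^G_P$ equals the identity and not merely an isomorphism, giving injectivity of $\mathrm{res}^G_P$ on the nose.
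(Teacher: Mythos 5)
Your proposal is correct and uses exactly the same argument as the paper: the identity $\mathrm{cor}^G_P\circ\mathrm{res}^G_P=[G:P]\cdot\mathrm{id}$ combined with the fact that $[G:P]$ is odd and $2$ annihilates $H^2(G,C_2)$, so the composite is an isomorphism and $\mathrm{res}^G_P$ is injective. The only difference is that you spell out the textbook details slightly more fully than the paper's one-line proof.
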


\begin{proof}
This follows directly  from the fact that  post-composition with the transfer is multiplication by the index $|G:P|$, which is an isomorphism as $|G:P|$ is prime to $2$ and  $2$ annihilates $H^2(G,C_2)$.
\end{proof}

\begin{lem}\label{lem:centralExtensionsD2n}
Let $n\geq 3$ be an integer. The following hold:
\begin{enumerate}
\item[\rm(a)] If $C_2$ denotes the cyclic group of order $2$, viewed as a trivial $kD_{2^{n-1}}$-module, then  $H^2(D_{2^{n-1}},C_2)\cong (\IZ/2)^3$.
\item[\rm(b)] There is a unique isomorphism class of central extensions
$$1\lra C_2\lra P\lra D_{2^{n-1}}\lra 1\,$$
such that $P\cong Q_{2^n}$.
\end{enumerate}
\end{lem}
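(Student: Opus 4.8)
The plan is to treat the two parts in sequence, using the standard identification of central extensions of a group $Q$ by $C_2$ (up to equivalence of extensions) with $H^2(Q,C_2)$, together with the well-known cohomology of dihedral $2$-groups.

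\medskip

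\noindent\emph{Part (a).} I would simply quote the computation of the mod-$2$ cohomology ring of $D_{2^{n-1}}$. For $n-1\geq 3$ the group $D_{2^{n-1}}$ has $2$-rank $2$, and its cohomology in degree $2$ is well known to be $3$-dimensional over $\IF_2$: writing $D_{2^{n-1}}=\langle s,t\mid s^2=t^2=(st)^{2^{n-2}}=1\rangle$, one has $H^*(D_{2^{n-1}},\IF_2)\cong \IF_2[x,y,w]/(xy)$ with $\deg x=\deg y=1$ and $\deg w=2$, so $H^2$ has $\IF_2$-basis $\{x^2,y^2,w\}$, i.e. $H^2(D_{2^{n-1}},C_2)\cong(\IZ/2)^3$. (For $n=3$, $D_4\cong C_2\times C_2$ and $H^2((C_2)^2,\IF_2)$ is again $3$-dimensional, so the statement is uniform in $n\geq 3$.) This is the routine part.

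\medskip

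\noindent\emph{Part (b).} The eight elements of $H^2(D_{2^{n-1}},C_2)$ parametrise the eight equivalence classes of central extensions $1\to C_2\to P\to D_{2^{n-1}}\to 1$, and I want to identify which of these have middle term $P\cong Q_{2^n}$. First, $Q_{2^n}$ does occur: the natural surjection $Q_{2^n}\twoheadrightarrow Q_{2^n}/Z(Q_{2^n})\cong D_{2^{n-1}}$ (the centre of $Q_{2^n}$ is the unique central subgroup of order $2$) exhibits $Q_{2^n}$ as such an extension, necessarily non-split since $Q_{2^n}$ has a unique involution. So the question is one of counting. I would argue that among the eight extension classes, exactly one has middle term $\cong Q_{2^n}$, as follows. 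A central extension of $D_{2^{n-1}}$ by $C_2$ gives $P$ with $|P|=2^n$; for $P$ to be $\cong Q_{2^n}$ it is necessary and sufficient that $P$ has a unique subgroup of order $2$ (this characterises $Q_{2^n}$ among groups of order $2^n$ with a cyclic subgroup of index $2$, and more robustly: a $2$-group is generalised quaternion iff it has a unique involution and is non-cyclic — and $P$ here is visibly non-cyclic for $n\geq 4$, while for $n=3$ one checks $Q_8$ directly). Lifting the two generators $s,t$ of $D_{2^{n-1}}$ to elements $\tilde s,\tilde t\in P$, the isomorphism type of $P$ is determined by the values in $C_2=\langle z\rangle$ of $\tilde s^2$, $\tilde t^2$ and $[\tilde s,\tilde t]^{2^{n-2}}$ (equivalently $(\tilde s\tilde t)^{2^{n-1}}$), which is exactly the datum of the class in $H^2$ against the basis $\{x^2,y^2,w\}$. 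One then checks that $P$ has a unique involution precisely for one of these eight choices of signs: if $\tilde s^2=1$ then $\tilde s$ is a second involution, ruling out four classes; among the remaining four (those with $\tilde s^2=z$, so $\tilde s$ has order $4$), symmetry in $s\leftrightarrow t$ forces $\tilde t^2=z$ as well for $P\cong Q_{2^n}$ (if $\tilde t^2=1$, $\tilde t$ is an involution), cutting down to two classes distinguished by the ``$w$-coordinate'', and exactly one of these yields the relation making $P=Q_{2^n}$ rather than the other group of order $2^n$ with those squaring data. Hence there is a unique such extension class, which is the assertion.

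\medskip

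\noindent\emph{Expected main obstacle.} The delicate point is the last case analysis: once $\tilde s^2=\tilde t^2=z$, there remain two extension classes, and I must show exactly one gives $Q_{2^n}$ (the other gives a non-isomorphic group — the ``semidihedral-like'' central extension, or for small $n$ a direct product). This requires pinning down the group-theoretic meaning of the remaining $H^2$-coordinate in terms of the power/commutator relations in $P$ and verifying the uniqueness-of-involution criterion on the nose. An alternative, cleaner route that I would prefer if available: invoke Lemma~\ref{lem:cohomWithCoeffsinC2} with $G$ a finite group (e.g. $\mathrm{SL}_2(q)$ for suitable $q$, or simply $Q_{2^n}$ itself mapping via $Q_{2^n}\to D_{2^{n-1}}$... ) to locate the relevant class, or better, observe that $\mathrm{Aut}(D_{2^{n-1}})$ acts on the eight extension classes, the orbit of the $Q_{2^n}$-class has some size, and a counting/orbit argument forces that orbit to be a single class — but establishing that the $Q_{2^n}$-class is $\mathrm{Aut}$-fixed, hence genuinely unique, still comes down to the same concrete check. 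I expect the writers simply to do the explicit presentation-level computation, which is short but must be done carefully.
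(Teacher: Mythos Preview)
Your plan is correct and, for part~(b), essentially coincides with the paper's argument: both parametrise the eight extension classes by lifting the generators $s,t$ to $\tilde s,\tilde t$ (the paper writes $r,s$) and recording the three ``sign'' bits $\tilde s^{\,2},\tilde t^{\,2},(\tilde s\tilde t)^{2^{n-2}}\in\langle z\rangle$. The difference is only in how the case analysis is closed off. You aim to isolate the $Q_{2^n}$ class via the unique-involution characterisation and flag the final two-case distinction (once $\tilde s^{\,2}=\tilde t^{\,2}=z$) as the delicate point; the paper instead simply identifies the middle group $P$ in all eight cases by name ($C_2\times D_{2^{n-1}}$, $D_{2^n}$, $(C_{2^{n-2}}\times C_2){:}C_2$, $SD_{2^n}$, $C_{2^{n-2}}{:}C_4$, $Q_{2^n}$), which disposes of your ``obstacle'' without any further argument --- in particular the companion to $Q_{2^n}$ in your last pair is $C_{2^{n-2}}{:}C_4$. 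This is less conceptual than your route but shorter, and it avoids the orbit/automorphism digression you contemplate.

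For part~(a) the methods genuinely differ: you quote the mod-$2$ cohomology ring $\IF_2[x,y,w]/(xy)$ and read off $\dim H^2=3$, whereas the paper uses the Universal Coefficient Theorem together with the Schur multiplier $M(D_{2^{n-1}})\cong\IZ/2$ and the abelianisation $(\IZ/2)^2$. Both are standard; yours has the mild advantage that the basis $\{x^2,y^2,w\}$ is exactly the triple of bits you use in~(b).

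One small slip: you write $(\tilde s\tilde t)^{2^{n-1}}$ where you mean $(\tilde s\tilde t)^{2^{n-2}}$ (the former is automatically $1$ since $z^2=1$), and the parenthetical identification with $[\tilde s,\tilde t]^{2^{n-2}}$ is not quite right once $\tilde s,\tilde t$ have order~$4$.
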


\begin{proof}{\ }
\begin{enumerate}
\item[\rm(a)] The cohomological Universal  Coefficient Theorem yields
$$H^2(D_{2^{n-1}},C_2)\cong \Hom_{\IZ}(M_2(D_{2^{n-1}}),\IZ/2)\oplus \Ext^1_{\IZ}(D_{2^{n-1}}/[D_{2^{n-1}},D_{2^{n-1}}],\IZ/2)\,,$$
where $M_2(D_{2^{n-1}})\cong \IZ/2$ is the Schur multiplier of $D_{2^{n-1}}$ and $D_{2^{n-1}}/[D_{2^{n-1}},D_{2^{n-1}}]\cong \IZ/2\times\IZ/2$ is the abelianisation of $D_{2^{n-1}}$. Hence \smallskip $H^2(D_{2^{n-1}},C_2)\cong (\IZ/2)^3$.
\item[\rm(b)] The isomorphism classes of central extensions of the  form
$$1\lra C_2\lra P\lra D_{2^{n-1}}\lra 1\,.$$
are in bijection with $H^2(D_{2^{n-1}}, C_2)$,  hence by (a) there are $8$ isomorphism classes of such extensions. 
Since a presentation of $D_{2^{n-1}}$ is $\langle \rho,\sigma\mid \rho^2=1=\sigma^2, (\rho\sigma)^{2^{n-2}}=1\rangle$, obviously $P$ admits a presentation of the form
$$\langle r,s,t\mid rt=tr, st=ts, t^2=1, r^2=t^a, s^2=t^b, (rs)^{2^{n-2}}=t^c\rangle, \,\,a,b,c\in\{0,1\}\,.$$
Letting $a,b,c$ vary, we obtain the following groups $P$:
\begin{itemize}
\item[\rm(i)] The case $a=b=c=0$ gives the direct product $C_2\times D_{2^{n-1}}$.
\item[\rm(ii)] The case $a=b=0$, $c=1$ gives the dihedral group $D_{2^{n}}$.
\item[\rm(iii)] The cases $a=c=0$, $b=1$ and $b=c=0$, $a=1$ give the group $(C_{2^{n-2}}\times C_2):C_2$.
\item[\rm(iv)] The cases $a=0$, $b=c=1$ and $b=0$, $a=c=1$ both give the semi-dihedral group $SD_{2^{n}}$ of order $2^n$.
\item[\rm(v)]  The case $c=0$, $a=b=1$ gives the group $C_{2^{n-2}}:C_4$.
\item[\rm(vi)] The case $a=b=c=1$ gives the generalised quaternion group \smallskip $Q_{2^n}$.
\end{itemize}
If $n\geq 4$, the groups in cases {\rm(i)-(vi)} are pairwise non-isomorphic. If $n=3$ the above holds as well, but the groups in {\rm(ii)} and {\rm(iii)} are all isomorphic to $D_8$, and the groups in {\rm(iv)} and {\rm(v)} are all isomorphic to $C_2\times C_4$. The claim follows.
\end{enumerate}
\end{proof}


\section{Lifting splendid Morita equivalences from a central quotient}

Although our aim is to treat blocks with generalised quaternion defect groups. We first present some results that hold for an algebraically closed field of arbitrary characteristic $p>0$ and principal blocks with arbitrary defect groups.  Throughout this section, we assume that $G$ and $H$ are two arbitrary finite groups, we let $P$ denote a common $p$-subgroup of $G$ and $H$. Furthermore, given a  subgroup $Z$ of $P$ which is normal in both $G$ and $H$, we use the bar notation   with $\bar G:=G/Z$, $\bar H:=H/Z$ and $\bar P:=P/Z$.

\begin{lem}{\ }\label{lem:Scott}\label{lem:GxH}
\begin{enumerate}
\item[\rm(a)]Let $Q\leq G$ be an arbitrary subgroup, let $Z\leq Z(G)\cap Q$ and set $\bar Q:=Q/Z$ and $\bar G:=G/Z$. 
Then
$$
{\mathrm{Sc}}(G,Q)\otimes_{kG}\,k\bar G\ \cong \ {\mathrm{Sc}}(\bar G,\bar Q)
$$
as right \smallskip $k\bar G$-modules.
\item[\rm(b)] Let $Z\leq P$ such that  $Z\leq Z(G)\cap Z(H)$.
Then
$$
k\bar G\otimes_{kG}\,{\mathrm{Sc}}(G\times H,\Delta P)\otimes_{kH}\,k\bar H
\cong{\mathrm{Sc}}(\bar G\times\bar H,\Delta\bar P)
$$
as right $k(\bar G\times\bar H)$-modules.
\end{enumerate}
\end{lem}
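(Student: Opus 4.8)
The plan is to deduce both statements from the general facts about Scott modules and central quotients recalled in Section~2, together with the observation that inflation of modules along a central quotient is compatible with tensoring up.

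For part (a), the key point is that $Q$ is a vertex of $\mathrm{Sc}(G,Q)$ and, since $Z\leq Q$ is central in $G$, it acts trivially on $\mathrm{Sc}(G,Q)$. Indeed, first note $\mathrm{Sc}(G,Q)=\mathrm{Sc}(G,Z)\cdot(\text{something})$ — more precisely, I would argue that $k_Z$ is a direct summand of $\mathrm{Sc}(G,Q)\!\downarrow_Z$ by Mackey/Frobenius reciprocity, and since $Z$ is a $p$-group its only indecomposable is $k_Z$, whence $Z\leq\ker\mathrm{Sc}(G,Q)$. (Alternatively: $\mathrm{Sc}(G,Q)$ is a summand of $k_Q\!\uparrow^G$, and $Z$ acts trivially on $k_Q\!\uparrow^G$ because $Z$ is central and contained in $Q$, so $Z$ acts trivially on every summand.) Therefore $\mathrm{Sc}(G,Q)$ is naturally a $k\bar G$-module, and by the canonical isomorphism $M\otimes_{kG}k\bar G\cong M$ recalled in Section~2 the left-hand side is just $\mathrm{Sc}(G,Q)$ regarded as a $k\bar G$-module. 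It remains to identify this with $\mathrm{Sc}(\bar G,\bar Q)$. For this I would use that inflation commutes with induction: $(k_Q\!\uparrow^G)$, viewed as a $k\bar G$-module, is $k_{\bar Q}\!\uparrow^{\bar G}$ (both equal $k[\bar G/\bar Q]$ since $G/Q\cong\bar G/\bar Q$). The Scott module $\mathrm{Sc}(G,Q)$ is the unique indecomposable summand of $k_Q\!\uparrow^G$ with $k_G$ in its top; since indecomposability is preserved under the bar construction (Section~2) and $k_G$ inflates to $k_{\bar G}$, the $k\bar G$-module $\mathrm{Sc}(G,Q)$ is an indecomposable summand of $k_{\bar Q}\!\uparrow^{\bar G}$ with $k_{\bar G}$ in its top, hence equals $\mathrm{Sc}(\bar G,\bar Q)$ by uniqueness.

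For part (b), I would apply part (a) to the group $G\times H$, the subgroup $Q=\Delta P\leq G\times H$, and the central subgroup $\Delta Z\leq Z(G\times H)\cap\Delta P$ (using $Z\leq Z(G)\cap Z(H)$). This gives
\[
\mathrm{Sc}(G\times H,\Delta P)\otimes_{k[G\times H]}k[\overline{G\times H}]\ \cong\ \mathrm{Sc}\bigl(\overline{G\times H},\,\overline{\Delta P}\bigr)
\]
as $k[\overline{G\times H}]$-modules, where the bar is the quotient by $\Delta Z$. The remaining work is purely formal bookkeeping: one has $\overline{G\times H}=(G\times H)/\Delta Z$, and I would identify this quotient with $\bar G\times\bar H$ — but here is the one subtlety to handle with care, since $(G\times H)/\Delta Z$ is \emph{not} literally $G/Z\times H/Z=(G\times H)/(Z\times Z)$. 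The resolution is that the two-sided tensor $k\bar G\otimes_{kG}(-)\otimes_{kH}k\bar H$ kills the larger subgroup $Z\times Z$, not just $\Delta Z$: performing the quotient by $\Delta Z$ first (via part (a)) and then by the image of $Z\times Z$ amounts to the same as quotienting directly by $Z\times Z$, because $\Delta Z$ acts trivially already and $(Z\times Z)/\Delta Z$ maps onto a central $p$-subgroup that likewise acts trivially on the Scott module. So I would instead apply part (a) directly with $Z\times Z$ in the role of the central subgroup, noting $Z\times Z\leq Z(G\times H)$ and $Z\times Z$ need not lie in $\Delta P$ — hence part (a) as stated does not quite apply, and the cleanest route is to factor the quotient map $G\times H\twoheadrightarrow\bar G\times\bar H$ through $(G\times H)/\Delta Z$ and apply part (a) once for $\Delta Z$ and then argue that the residual central $p$-subgroup acts trivially, concluding again by the uniqueness characterisation of Scott modules that the result is $\mathrm{Sc}(\bar G\times\bar H,\Delta\bar P)$, using $\overline{\Delta P}=\Delta\bar P$ under this identification.

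The main obstacle is thus the identification of the various central quotients of $G\times H$ in part (b): one must be careful that $(G\times H)/\Delta Z\neq\bar G\times\bar H$ and that part (a) is literally only about central subgroups \emph{contained in the vertex}. I expect the honest fix to be either (i) a two-step quotient as sketched, invoking triviality of the $Z$-action at each stage, or (ii) a direct re-proof in the product setting, using that $k_{\Delta P}\!\uparrow^{G\times H}$ becomes $k_{\Delta\bar P}\!\uparrow^{\bar G\times\bar H}$ after the two-sided tensor and then citing uniqueness of the Scott module together with the indecomposability-under-bar statement of Section~2. Everything else is routine application of the properties of induction, inflation, and Scott modules collected in the Preliminaries.
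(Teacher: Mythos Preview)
Your plan for part~(a) is essentially the paper's proof: show $Z$ acts trivially on $k_Q{\uparrow}^G$ (hence on its summand $M=\mathrm{Sc}(G,Q)$), use the Section~2 fact that indecomposability is preserved under the bar, and identify $k_Q{\uparrow}^G\otimes_{kG}k\bar G$ with $k_{\bar Q}{\uparrow}^{\bar G}$.

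For part~(b) you are more careful than the paper, which simply rewrites the two-sided tensor as $\mathrm{Sc}(G\times H,\Delta P)\otimes_{k(G\times H)}k(\bar G\times\bar H)$ and asserts ``the assertion follows from~(a)''. You are right that this is not literally an instance of~(a), since the kernel $Z\times Z$ of $G\times H\to\bar G\times\bar H$ is \emph{not} contained in $\Delta P$. However, neither of your proposed repairs closes the gap. For route~(i), the claim that the residual central $p$-subgroup $(Z\times Z)/\Delta Z$ acts trivially on $\mathrm{Sc}\bigl((G\times H)/\Delta Z,\,\Delta P/\Delta Z\bigr)$ is false: take $G=H=P=Z=C_2$, so that $(G\times H)/\Delta Z\cong C_2$ and $\Delta P/\Delta Z=1$; then the Scott module is the regular module $kC_2$, on which $(Z\times Z)/\Delta Z\cong C_2$ acts nontrivially. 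For route~(ii), the ``indecomposability-under-bar'' statement of Section~2 requires $Z\times Z\leq\ker M$, and the same example shows this fails (the $2$-dimensional module $\mathrm{Sc}(C_2\times C_2,\Delta C_2)$ has $Z\times 1$ acting by a swap).

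What does work is the following small addition to the argument of~(a). The Scott module $M=\mathrm{Sc}(G\times H,\Delta P)$ has \emph{simple} top $k_{G\times H}$, by the relative-projective-cover characterisation cited from \cite{The85}. Since $Z\times Z$ is a central $p$-subgroup, the augmentation ideal $I_{Z\times Z}\cdot k[G\times H]$ is nilpotent and hence contained in the Jacobson radical; therefore $M\otimes_{k[G\times H]}k[\bar G\times\bar H]=M/M\!\cdot\! I_{Z\times Z}$ has the same top as $M$, namely the simple module $k_{\bar G\times\bar H}$, and is in particular indecomposable. The chain of isomorphisms in the paper's proof of~(a) still gives $k_{\Delta P}{\uparrow}^{G\times H}\otimes_{k[G\times H]}k[\bar G\times\bar H]\cong k_{\Delta\bar P}{\uparrow}^{\bar G\times\bar H}$ (this step never used the containment hypothesis), so $M\otimes_{k[G\times H]}k[\bar G\times\bar H]$ is an indecomposable summand of $k_{\Delta\bar P}{\uparrow}^{\bar G\times\bar H}$ with $k$ in its top, hence equals $\mathrm{Sc}(\bar G\times\bar H,\Delta\bar P)$.
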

\begin{proof}{\ }
\begin{enumerate}
\item[\rm(a)] Set $M:={\mathrm{Sc}}(G,Q)$. Clearly $Z\leq\ker(k_Q{\uparrow}^G)$, hence  $Z\leq\ker M$ as well, since by definition $M\mid {k_Q}{\uparrow}^G$. 
Hence 
$M$ may be considered as a $k\bar G$-module, and in fact $M\cong M\otimes_{kG}\,k\bar G$ is indecomposable as a $k\bar G$-module.
Since $M\mid k_Q{\uparrow}^G$,
it follows that 
\begin{equation*}
\begin{split}
M_{k\bar G} \cong  M\otimes_{kG}\,k\bar G \, \Big| \,  k_Q{\uparrow}^G\otimes_{kG}k\bar G &=k_Q\otimes_{kQ}\,kG\otimes_{kG}\,k\bar G\\
&\cong k_Q\otimes_{kQ}\,k\bar G \\
&\cong k_{\bar Q}\otimes_{k\bar Q}\,k\bar G = k_{\bar Q}{\uparrow}^{\bar G}.                        
\end{split}
\end{equation*}
Hence ${\mathrm{Sc}}(\bar G,\,\bar Q) \cong M\otimes_{kG}\,k\bar G$, since $M$ is indecomposable as a $kG$-module, and thus also as a $k\bar G$-module. 
\item[\rm(b)] Since
$$ 
k\bar G\otimes_{kG}\,{\mathrm{Sc}}(G\times H,\,\Delta P)\otimes_{kH}\,k\bar H
\cong
{\mathrm{Sc}}(G\times H,\,\Delta P)\otimes_{k(G\times H)}\,k(\bar G\times\bar H)
$$
as right $k(\bar G\times\bar H)$-modules, the assertion follows from (a).
\end{enumerate}
\end{proof}

\begin{lem}[{}{\cite[Lemma 10.2.11]{Rou98}}]{\ }\label{thm:rouquier}
Assume $Z$ is a subgroup of $P$ such that $Z\vartriangleleft G$ and $Z\vartriangleleft H$,  
and let $A$ and $B$ be blocks  of $kG$ and $kH$, respectively. 
Let $M$ be an indecomposable $(kG, kH)$-bimodule, which is a trivial source module  with vertex $\Delta P$ when regarded as a right $k(G\times H)$-module. Set $\bar M:=k\bar G\otimes_{kG}\,M\otimes_{kH}\,k\bar H$.
Then the following are equivalent:
\begin{itemize}
\item[\rm(i)] 
$M$ induces a Morita equivalence between the blocks $A$ and $B$;
\item[\rm(ii)]
$\bar M$ induces a Morita equivalence between $\bar A$ and $\bar B$.
\end{itemize}
\end{lem}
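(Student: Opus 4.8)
The plan is to reduce the statement to a known result of Rouquier about descent of Morita equivalences along a central quotient, after setting up the correct bimodule. First I would observe that under the hypotheses $Z \vartriangleleft G$ and $Z \vartriangleleft H$ with $Z \le P$, the bimodule $M$ is a trivial source $k(G\times H)$-module with vertex $\Delta P$, so its Brauer quotients are computable and the hypotheses of \cite[Lemma 10.2.11]{Rou98} are met verbatim; the bulk of the proof is therefore an appeal to that reference, and the present lemma is essentially a restatement of it in our notation. However, to make the statement self-contained and to connect it to the way we shall apply it, I would make explicit that $\bar M := k\bar G \otimes_{kG} M \otimes_{kH} k\bar H$ is still an indecomposable trivial source $k(\bar G \times \bar H)$-module with vertex $\Delta \bar P$: indecomposability and the identification of the vertex follow from the general facts recalled in Section~2 (inflation/deflation along $Z$ preserves indecomposability, and $M \otimes_{k(G\times H)} k(\bar G \times \bar H) \cong \bar M$ canonically).

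The key steps, in order, would be: \textbf{(1)} Record that $Z \le Z(G) \cap Z(H)$ is \emph{not} assumed here (only normality), but that $M$ being a trivial source module with vertex $\Delta P$ and $Z \le P$ still forces $Z \times Z$, and in particular $\Delta Z$, to act trivially on $M$ in the relevant sense needed for the deflation to behave well; more precisely, one checks $Z \le \ker_G(M)$ and $Z \le \ker_H(M)$ using that $Z$ lies in a vertex and $M$ is a permutation module on $\Delta P$-cosets. \textbf{(2)} Apply \cite[Lemma 10.2.11]{Rou98} directly: it asserts precisely the equivalence (i)~$\Leftrightarrow$~(ii) for blocks $A,B$ and their images $\bar A, \bar B$ under the canonical surjections, given a trivial source bimodule with vertex $\Delta P$. \textbf{(3)} For the reader's convenience, sketch the two implications: for (i)~$\Rightarrow$~(ii), if $M \otimes_{kH} -$ and $M^* \otimes_{kG} -$ are mutually inverse equivalences between $\mathrm{mod}\,A$ and $\mathrm{mod}\,B$, then applying $k\bar G \otimes_{kG} -$ and $- \otimes_{kH} k\bar H$ and using associativity of tensor products together with $\bar M \otimes_{k\bar H} N \cong M \otimes_{kH} N$ for $k\bar H$-modules $N$ (inflation), one transports the equivalence down; for (ii)~$\Rightarrow$~(i), one uses that a trivial source bimodule inducing a stable equivalence of Morita type that becomes a Morita equivalence after deflation must already be a Morita equivalence, since the deflation detects the rank of the Cartan-type invariants, or alternatively invokes that $M$ induces a stable equivalence (being trivial source with vertex $\Delta P$, a diagonal defect group) and that the "extra" indecomposable summands that would obstruct a Morita equivalence have vertices strictly contained in $\Delta P$ and hence survive to $\bar G \times \bar H$.

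The main obstacle I expect is step~\textbf{(3)}(ii): ensuring that passing to the central quotient does not \emph{create} a Morita equivalence where there was none upstream. The cleanest route is to note that $M$, being an indecomposable trivial source $(kG,kH)$-bimodule with vertex $\Delta P$, automatically induces a stable equivalence of Morita type between $A$ and $B$ (this is standard for bimodules with "diagonal" vertex $\Delta P$ and is exactly the setup in \cite[Lemma 4.1]{KL18}); then $M$ induces a Morita equivalence if and only if $M \otimes_{kH} M^* \cong A \oplus (\text{projective})$ as $(A,A)$-bimodules with no projective summand needed, equivalently the stable equivalence is "trivial" on one side. Deflation along $Z$ sends projective bimodules to projective bimodules and is faithful enough to detect whether such a projective summand is present, because $\bar M \otimes_{k\bar H} \bar M^*$ is the deflation of $M \otimes_{kH} M^*$. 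Hence the presence or absence of the obstructing summand is preserved, which is precisely what Rouquier's lemma packages. Since the reference \cite{Rou98} supplies all of this, I would keep the written proof short: invoke Lemma~\ref{lem:Scott} to identify $\bar M$, cite \cite[Lemma 10.2.11]{Rou98} for the equivalence, and add one or two sentences indicating why the trivial source hypothesis is what makes the descent work in both directions.
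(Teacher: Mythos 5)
Your overall strategy --- reduce the statement to a direct citation of \cite[Lemma 10.2.11]{Rou98} --- is exactly the paper's. However, two points need attention. First, the claim in your step (1) that $Z\leq\ker_G(M)$ and $Z\leq\ker_H(M)$ is false. Since $M$ is a direct summand of $k_{\Delta P}{\uparrow}^{G\times H}$ and, by the Mackey formula, $(Z\times\{1\})\cap{}^{(g,h)}(\Delta P)=\{1\}$ for every $(g,h)\in G\times H$ (an element $(gug^{-1},huh^{-1})$ with $u\in P$ lies in $Z\times\{1\}$ only if $u=1$), the restriction of $M$ to $Z\times\{1\}$, and likewise to $\{1\}\times Z$, is \emph{free}, not trivial; this is consistent with the fact that the deflation $k\bar G\otimes_{kG}M\otimes_{kH}k\bar H$ divides the $k$-dimension by $|Z|^2$. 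What one can say is that the \emph{diagonal} $\Delta Z$ acts trivially when $Z$ is central in both groups --- but centrality is not assumed in this lemma, only normality. The one-sided kernels never contain $Z$ (for $Z\neq\{1\}$), so this step as written is an error, although nothing later in your argument genuinely depends on it.

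Second, and more importantly, the assertion that the hypotheses of Rouquier's lemma ``are met verbatim'' skips the only non-trivial content of the proof. That lemma requires, for the vertex $Q=\Delta P$ and the normal subgroup $R=Z$, the two group-theoretic conditions
$\Delta P\cap(\{1\}\times H)=\Delta P\cap(G\times\{1\})=\{1\}$
and
$Z\times Z\leq(Z\times\{1\})\Delta P=(\{1\}\times Z)\Delta P$,
and the paper's proof consists precisely of verifying these: the first is immediate, and the second follows from identities such as $(z,z')=(z{z'}^{-1},1)(z',z')$ and $(z,1)(u,u)=(1,z^{-1})(zu,zu)$, using $Z\leq P$. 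Your step (3), which re-derives the two implications of Rouquier's lemma via stable equivalences and the behaviour of projective summands under deflation, is not needed once the citation is in place, and is only sketched (the direction (ii)$\Rightarrow$(i) in particular would require real work to make rigorous). I would delete it and replace it by the explicit verification of the two displayed conditions above, which is what actually licenses the appeal to \cite[Lemma 10.2.11]{Rou98}.
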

\noindent Note that in this lemma $\bar A$ and $\bar B$ are not necessarily blocks of $k\bar G$ and $k\bar H$, but sums of blocks.

\begin{proof}
 Apply \cite[Lemma 10.2.11]{Rou98} with 
$Z=:R$, $A=:\mathcal OGe$, $B=:\mathcal OHf$,
$\bar A=:\mathcal O\bar G\bar e$, $\bar B=: \mathcal O\bar H\bar f$,
$M=: C$, $\bar M=:\bar C$,
$\Delta P=:Q$, and  replace ``a Rickard complex'' with 
``a bimodule inducing the Morita equivalence''. Then, we see that
$$\Delta P \cap (\{1\}\times H)=\Delta P\cap(G\times\{1\})=\{1\}$$
and
$$Z\times Z\leq (Z\times\{1\})\Delta P= (\{1\}\times Z)\Delta P\,,$$ since $(z,z')=(z{z'}^{-1},1)(z',z')$, $(z,z')=(1,z'z^{-1})(z,z)$ and
$(z,1)(u,u)=(1,z^{-1})(zu,zu)$ for all $z,z'\in Z$ and all $u\in P$. The claim follows.
\end{proof}

\begin{prop}\label{thm:liftingNew}
Assume $Z\leq P$ such that $Z\leq Z(G)\cap Z(H)$. Let  
$A$ and $B$ be blocks of $kG$ and $kH$, respectively.  
Let $M$ be an indecomposable $(A, B)$-bimodule, which is a trivial source module with vertex $\Delta P$   when regarded as a right $k(G\times H)$-module. 
Set $\bar M:=k\bar G\otimes_{kG}\,M\otimes_{kH}\,k\bar H$. Then:
\begin{enumerate}
\item[\rm (a)] The following statements are equivalent:
   \begin{itemize}
\item[\rm(i)] $M$ induces a Morita equivalence between the blocks $A$ and $B$.
\item[\rm(ii)] $\bar M$ induces a Morita equivalence between the blocks $\bar A$ and \smallskip $\bar B$.
   \end{itemize}
\item[\rm (b)] In particular,    the following statements are equivalent:
   \begin{itemize}
\item[\rm(i)] ${\mathrm{Sc}}(G\times H,\Delta P)$ induces a Morita equivalence between $B_0(kG)$ and $B_0(kH)$.
\item[\rm(ii)] ${\mathrm{Sc}}(\bar G\times\bar H,\Delta \bar P)$  induces a Morita equivalence between $B_0(k\bar G)$ and $B_0(k\bar H)$.
   \end{itemize}
\end{enumerate}
\end{prop}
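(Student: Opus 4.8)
The plan is to deduce both parts essentially from Lemma~\ref{thm:rouquier}, after checking that its hypotheses are satisfied. For part~(a), first I would observe that since $Z\leq Z(G)\cap Z(H)$ we have in particular $Z\vartriangleleft G$ and $Z\vartriangleleft H$, and moreover $Z\leq P$ by assumption, so $Z$ is a subgroup of $P$ normal in both $G$ and $H$; thus the standing hypotheses of Lemma~\ref{thm:rouquier} hold. Next, $M$ is by hypothesis an indecomposable $(A,B)$-bimodule which is a trivial source module with vertex $\Delta P$ as a right $k(G\times H)$-module; viewing $M$ as a $(kG,kH)$-bimodule (by inflation along the block idempotents, which changes nothing about its module structure), it is still indecomposable with trivial source and vertex $\Delta P$. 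Hence Lemma~\ref{thm:rouquier} applies verbatim with this $M$, and with $\bar M=k\bar G\otimes_{kG} M\otimes_{kH} k\bar H$, and its equivalence (i)$\Leftrightarrow$(ii) is exactly the equivalence claimed in~(a). The only subtlety to address is that in Lemma~\ref{thm:rouquier} the statement refers to $M$ inducing a Morita equivalence between the blocks $A$ and $B$ of $kG$ and $kH$, which matches our setup, while $\bar A,\bar B$ are sums of blocks as noted there; this is harmless, since ``$\bar M$ induces a Morita equivalence between $\bar A$ and $\bar B$'' is precisely statement~(ii).

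For part~(b), I would specialise part~(a) to $A=B_0(kG)$, $B=B_0(kH)$ and $M={\mathrm{Sc}}(G\times H,\Delta P)$. I must check that this $M$ satisfies the hypotheses of part~(a): the Scott module ${\mathrm{Sc}}(G\times H,\Delta P)$ is by definition indecomposable and a $p$-permutation (hence trivial source) module, and since $P\in\Syl_p(G)=\Syl_p(H)$ the diagonal $\Delta P$ is a Sylow $p$-subgroup of $\Delta(G\times H)$-type, so $\Delta P$ is indeed a vertex of ${\mathrm{Sc}}(G\times H,\Delta P)$; also, as it lies in the top of $k_{\Delta P}{\uparrow}^{G\times H}$, which belongs to the principal block $B_0(k[G\times H])=B_0(kG)\otimes B_0(kH)$, it is a $(B_0(kG),B_0(kH))$-bimodule. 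So part~(a) applies. It then remains to identify $\overline{{\mathrm{Sc}}(G\times H,\Delta P)}=k\bar G\otimes_{kG}{\mathrm{Sc}}(G\times H,\Delta P)\otimes_{kH} k\bar H$ with ${\mathrm{Sc}}(\bar G\times\bar H,\Delta\bar P)$: this is exactly Lemma~\ref{lem:GxH}(b), whose hypothesis $Z\leq Z(G)\cap Z(H)$ is our assumption. Substituting this identification into the equivalence furnished by part~(a) yields the equivalence (i)$\Leftrightarrow$(ii) of~(b), noting that $B_0(k\bar G)$ is a summand of $\overline{B_0(kG)}$ (and symmetrically for $H$), and that for principal blocks the image $\overline{B_0(kG)}$ contains $B_0(k\bar G)$ as a block direct summand.

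The only genuine point needing care — and the place I would expect a referee to push back — is the bookkeeping around $\bar A$, $\bar B$ not being blocks but sums of blocks, and the claim in~(b) that the relevant equivalence concerns the \emph{principal} blocks $B_0(k\bar G)$, $B_0(k\bar H)$ rather than $\overline{B_0(kG)}$, $\overline{B_0(kH)}$. I would resolve this by recalling that $B_0(kG)$ maps onto $B_0(k\bar G)$ under $\pi\colon kG\twoheadrightarrow k\bar G$ (a block summand of $\overline{B_0(kG)}$), and that $\bar M$ is supported on the principal-block summands on both sides precisely because $\bar M$ is a summand of $\overline{{\mathrm{Sc}}(G\times H,\Delta P)}\cong {\mathrm{Sc}}(\bar G\times\bar H,\Delta\bar P)$, which lies in $B_0(k\bar G)\otimes B_0(k\bar H)$; so the Morita equivalence in~(a)(ii), once transported through Lemma~\ref{lem:GxH}(b), is automatically a Morita equivalence between the principal blocks. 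Everything else is a direct invocation of the two cited lemmas.
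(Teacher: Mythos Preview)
Your overall strategy --- invoke Lemma~\ref{thm:rouquier} for part~(a), then specialise via Lemma~\ref{lem:GxH}(b) for part~(b) --- matches the paper's. However, you have missed the one genuinely new ingredient that Proposition~\ref{thm:liftingNew} adds on top of Lemma~\ref{thm:rouquier}. The statement~(a)(ii) asserts that $\bar A$ and $\bar B$ are \emph{blocks}, not merely sums of blocks; you explicitly acknowledge that Lemma~\ref{thm:rouquier} only gives sums of blocks and then dismiss this as ``harmless''. It is not: the whole point of the stronger hypothesis $Z\leq Z(G)\cap Z(H)$ (rather than just $Z\vartriangleleft G$, $Z\vartriangleleft H$) is precisely that the image of a block under a \emph{central} quotient is again a single block (see \cite[Chap.~5, Theorems~8.10 and~8.11]{NT89}). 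The paper's proof of~(a) consists of exactly this observation followed by Lemma~\ref{thm:rouquier}.

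This omission then propagates into your treatment of~(b), where you are forced into a roundabout argument about $B_0(k\bar G)$ being a block summand of $\overline{B_0(kG)}$ and $\bar M$ being supported only on that summand. Once you know that central quotients send blocks to blocks, one has simply $\overline{B_0(kG)}=B_0(k\bar G)$ and $\overline{B_0(kH)}=B_0(k\bar H)$, and part~(b) is then immediate from~(a) and Lemma~\ref{lem:GxH}(b), with no bookkeeping about summands required.
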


\begin{proof}{\ }
\begin{enumerate}
\item[\rm (a)] First, we note that $\bar A$ and $\bar B$ are indeed blocks of $k\bar G$ and $k\bar H$, respectively, because $Z\leq Z(G)\cap Z(H)$. See  \cite[Chapter 5, Theorems 8.10 and 8.11]{NT89}.
Thus, the claim follows  directly from \smallskip Lemma~\ref{thm:rouquier}. 
\item[\rm (b)] We have $\overline{B_0(kG)}=B_0(k\bar G)$, respectively $\overline{B_0(kH)}=B_0(k\bar H)$, and by  Lemma \ref{lem:GxH}(b), $\overline{{\mathrm{Sc}}(G\times H,\Delta P)}= {\mathrm{Sc}}(\bar G\times\bar H,\Delta \bar P)$. Therefore the claim is straightforward from (a).
\end{enumerate}
\end{proof}

\begin{rem}
We note that a weaker version of Theorem~\ref{thm:liftingNew}(a) was used  in \cite[Section 3.5]{UN03} in the context of Brou\'{e}'s abelian defect group conjecture in characteristic $3$ for the group $\SU(3,q^2)$ with $q\equiv 2$ or $5\pmod{9}$.
Moreover,  Theorem~\ref{thm:liftingNew}(b) generalises a previous result \cite[Theorem]{KK05} by the first author and Kunugi in the special case of principal blocks.
\end{rem}


\section{Proof of the main results}

\begin{proof}[Proof of Theorem~\ref{thm:intro}]
Since we are working with principal 2-blocks, we may assume that
$O_{2'}(G)=1$. Thus it follows from the Brauer-Suzuki Theorem \cite[Theorem~2]{Bra64} that the centre $Z:=Z(G)$ of $G$ is cyclic of order 2. Hence $\bar P:=P/Z\cong D_{2^{n-1}}$ 
is a Sylow $2$-subgroup of $\bar G:=G/Z$. \par 
By Proposition~\ref{thm:liftingNew}(b),  the splendid Morita equivalences of Theorem~\ref{dihedral} between principal blocks with dihedral defect groups can be lifted to splendid Morita equivalences between principal blocks with generalised quaternion defect groups.  
Therefore, we need to describe all group extensions of the form
$$1\lra Z\lra G\lra \bar G\lra 1\,,$$
where $\bar G$ is one of the groups occurring in Theorem~\ref{dihedral}(a),$(1)$--$(6)$ and such that a Sylow $2$-subgroup of $G$ is generalised quaternion, that is such that  we have a commutative diagram of the following form:
$$\xymatrix{  1  \ar[r]^-{}   &  Z \ar[r]^-{}  \ar@{=}[d]_-{}   &  G   \ar[r]^-{can}    & \bar G   \ar[r]   &  1  \\
1\ar[r]^-{}   &  Z  \ar[r]^-{}  &  P=Q_{2^n}  \ar[r]^-{can} \ar@{^(->}[u]_-{}   &  \bar P=D_{2^{n-1}} \ar[r]^-{} \ar@{^(->}[u]_{} &   1   }$$ 
By Lemma~\ref{lem:cohomWithCoeffsinC2}, the restriction map 
$$\text{res}^{\bar G}_{\bar P}:H^2(\bar G,C_2)\lra H^2(\bar P,C_2)$$
is injective, and  by Lemma~\ref{lem:centralExtensionsD2n}, there is a unique class of $2$-cocycles in $H^2(D_{2^{n-1}},C_2)\cong(\IZ/2)^3$ such that the middle term of the corresponding extension is isomorphic to $Q_{2^n}$. Therefore, there can be at most one  class of $2$-cocycles in $H^2(\bar G, C_2)$ corresponding to a group extension where the middle term  has a generalised quaternion 
Sylow 2-subgroup. 
Thus, we can now go through the list $(1)$--$(6)$ in Theorem~\ref{dihedral}: If $\bar G=D_{2^{n-1}}$, then $G=Q_{2^n}$. If $n=4$ and $\bar G=\fA_7$, then $G=2.\fA_7$ has a generalised quaternion Sylow $2$-subgroup. 
If $\bar G=\PSL_2(q)$ with $q$ is an odd prime power such that $(q\pm1)_2=2^{n-1}$, then 
$\SL_2(q)$ has a generalised quaternion Sylow $2$-subgroup. If $\bar G=\PGL_2(q)$ with $q$ is an odd prime power such that $(q\pm1)_2=2^{n-1}$, then 
$2.\PGL_2(q)$ has a generalised quaternion Sylow $2$-subgroup.  This proves (a).\par
Part (b) and 
Part (c) are then also straightforward from Theorem~\ref{dihedral}(b) and (c) together with Proposition~\ref{thm:liftingNew}(b). 
\end{proof}
\bigskip

\section*{Acknowledgments}
\noindent
{\small The authors would like to thank Gunter Malle for his notes and comments on an early version of this paper,  and the referee for their careful reading of the manuscript and useful suggestions.}

\bigskip

\end{document}